\author[Aram L. Karakhanyan]{Aram L. Karakhanyan}
\title[Monotonicity Formula]{A new monotonicity formula for quasilinear elliptic free boundary problems}
\address{School of Mathematics, The University of Edinburgh, EH9 3FD}
\email{aram6k@gmail.com}
\thanks{$2000$ {\it Mathematics Subject Classification.\/} Primary
 35R35, 35K05, 35K55.}
\date{}
\theoremstyle{plain}
\newtheorem{theorem}{Theorem}[section]
\newtheorem{lemma}[theorem]{Lemma}
\newtheorem{prop}[theorem]{Proposition}
\theoremstyle{remark}
\newtheorem{remark}[theorem]{Remark}
\theoremstyle{definition}
\newtheorem{defn}[theorem]{Definition}
\renewcommand\epsilon\varepsilon 
\renewcommand\phi\varphi 
\renewcommand\div{\operatorname{div}} 
\newcommand\R{\mathbb{R}} 
\newcommand\Rn{\R^n} 
\numberwithin{equation}{section}
\newcommand\I[1]{\chi_{\{#1\}}}  
\newcommand\fb[1]{\partial{\{#1>0\}}}  
\newcommand\na{\nabla}
\renewcommand\H{\mathscr H}
\renewcommand\L{\rule[-0.5mm]{0.5pt}{4mm}\rule[-0.5mm]{4mm}{0.5pt}}
\begin{document}

\begin{abstract}
We construct a monotonicity formula for a class of 
free boundary problems associated with the stationary points of the functional 
\[
J(u)=\int_\Omega F(|\nabla u|^2)+\mbox{meas}(\{u>0\}\cap \Omega), 
\]
where $F$ is a density function satisfying some structural conditions.

The onus of proof lies with the careful 
analysis of the ghost function, the gradient part in the  Helmholtz-W\'eyl decomposition of a nonlinear flux that appears in the domain variation formula for $J(u)$. 

As an application we prove full regularity for a class of quasilinear Bernoulli type 
free boundary problems in $\R^3$.

\end{abstract}

\maketitle

%
%


\section{Introduction}
The aim of this paper is to construct  monotonicity 
formulas for nonlinear elliptic free boundary problems.
For linear problems such formulas are well known and had been 
introduced decades ago \cite{ACF},  \cite{APh},  \cite{Spruck}, 
\cite{W-cpde}. They served as powerful tools to classify the 
global profiles and prove fine and partial regularity results 
reminiscent to the classification of the singular points of  minimal surfaces \cite{Simon}. 
There is a strong connection between these two theories 
as one can see in $\R^3$, where the free boundary cones 
can be transformed into minimal surfaces in  $\R^3$ through the 
Minkowski support function \cite{K-catenoid}. 
In particular, the famous double cone \cite{AC}, which is a 
stationary point of the Alt-Caffarelli functional becomes a 
catenoid through this transformation \cite{K-catenoid}.

Since the discovery of these powerful tools the remaining open question is: \textit{Can one construct monotonicity formulas for nonlinear problems?}

In this paper we give affirmative answer to this question for a class of quasilinear 
free boundary problems considered in \cite{ACF}, which amounts to 
studying the stationary points and local minima of the functional 
\begin{eqnarray}\nonumber
 J(u)=\int_{\Omega}F(\nabla u)
+
\lambda(u),
\end{eqnarray}
where $\Omega\subset \R^n$, $\lambda(u)$ is some function of $u$ and 
$F(t):\R_+\mapsto \R_+, F(t)\in C^{2, 1}[0, \infty]$ satisfies the following conditions:
\begin{itemize}
 \item[$(\bf F_1)$] \quad $F(0)=0,\qquad  c_0\leq F'(t)\leq C_0, $ 
\item[$(\bf F_2)$] \quad $\displaystyle 0\leq F''(t)\leq \frac{C_0}{1+t}, $ 
\end{itemize}
for some positive constants $c_0, C_0$, see  \cite{ACF-Quasi} page 2. 
A typical example is $\lambda(u)=\lambda \I {u>0}$, where $\lambda$
is a positive constants and $\chi _D$ is the characteristic function of a set $D$. 
Note that formally $\div(F'(|\nabla u|^2)\nabla u)$
is a measure supported on the boundary of the set 
$\partial\{u>0\}$.

Let $0\in \fb{u}$. We show that there is 
a function $\upphi$ such that 
\begin{align}
A(r):=  \frac 1 {r^n}\int _{B_r} F(|\na u|^2)+\lambda \I {u>0}
-\frac1{r^{n-1}} \int_{\partial B_r}\upphi
\end{align}
is monotone non decreasing function of $r$. Moreover, $A(r)$ is constant 
if and only if $u$ is a homogenous function of degree one. 

The principal difficulty here is
the control of the integral averages of  
$\upphi$. Note that  $\upphi(x)=|x|^{-2}u^2(x)$ if $F(t)=t$, in other words,  when 
the local minimizers are harmonic function in $\{u>0\}$ then $\upphi$ 
is bounded in view of the local Lipschitz regularity of $u$.
If $F$ is not linear then $\upphi=C\frac{u^2(x)}{|x|^2}+\phi$, where $C$ is a constant, and one has to prove that the integral average of the function $\phi$, called  the ghost function,  is bounded as $r\to 0$. 

Our main result is the following 

\begin{theorem}

Suppose $u$ is a local minimizer of $J$. 
Let $B_R(x_0)\subset\Omega\subset \R^n$ and $x_0\in\partial\{u>0\}$. Assume that
$u$ is Lipschitz in $B_R(x_0)$.
Then for $r\leq R$ 
\setlength\arraycolsep{2pt}
\begin{eqnarray}
\\
A(x_0, r)= \frac 1 {r^n}\int _{B_r(x_0)} F(|\na u|^2)+\lambda(u)
-\frac{F'(1)}{r^{n+1}} \int_{\partial B_r(x_0)}u^2
-\frac1{r^{n-1}} \int_{\partial B_r(x_0)}\phi\nonumber
\end{eqnarray}
is a monotone function of $r$. Moreover

\begin{equation}
A'(x_0, r)= \frac 2{r^n}\int_{\partial B_r(x_0)}F'(|\na u|^2)\left(u_\nu-\frac ur\right)^2\ge 0, 
\end{equation}

\begin{itemize}
\item if $\phi^{x_0}$ satisfies the condition \eqref{Btch} at $x_0$, and $u_0$
is a blow up corresponding to the sequence $r_k$, then
 any blow-up limit of
$\displaystyle  u_r(y)=\frac{u_0(r y)}{r}$ is homogeneous function of degree 1.
\item if $\Omega\subset \R^3$ and \eqref{Btch} and \eqref{ffrrtt} are satisfied 
then $x_0$ is a regular free boundary point of $\fb u$. 
\end{itemize}
\end{theorem}

%
%
\vspace{1cm}
\section{Domain variation}

In his section we derive a formula for the first domain
variation of the functional
\begin{eqnarray}\nonumber
 J(u)=\int_{\Omega}F(\nabla u)
+
\lambda(u)
\end{eqnarray}
where $\lambda(u)$ is some function of $u$ and 
$F(t):\R_+\mapsto \R_+, F(t)\in C^{2, 1}[0, \infty]$ satisfying the following conditions:
\begin{itemize}
 \item $F(0)=0,\qquad  c_0\leq F'(t)\leq C_0, $ 
\item $\displaystyle 0\leq F''(t)\leq \frac{C_0}{1+t}, $ 
\end{itemize}
for some positive constants $c_0, C_0$, see  \cite{ACF-Quasi} page 2. 
More generally, one may take $\lambda(u)=\lambda u^q\I {u>0}$, where $\lambda, q$
are positive constants and $\chi_D$ is the characteristic function of a set $D$.

\subsection{Variational Solutions}
Let us consider the following function 
\begin{equation}
\lambda(u)=\lambda_1u^{q}\I{u>0}+\lambda_2(-u)^{q}\I{u\le0}, \qquad q\in(0, 2)\
\end{equation}
where $\lambda_1, \lambda_2$ are positive constants.
The Euler-Lagrange equation for the minima of $J(u)$ has the form 
\begin{align}\label{E-L-F}
 \div(DF(\na u))=q\lambda_1u^{q-1}\I{u>0}-q\lambda_2(-u)^{q-1}\I{u\le 0}, \qquad q\in(0, 2)\\\nonumber
\end{align}
and, moreover for $ q=0$, 
\begin{eqnarray}\label{E-L-p}
&\div(F'(|\nabla u|^2)\nabla u)=0\ \mbox{in}\  \Omega\cap\left(\{u>0\}\cup \{u<0\}\right)\\\nonumber
 & \Psi(|\na u^+|^2)-\Psi(|\na u^-|^2)=\lambda_1-\lambda_2\ \mbox{on}\   \Omega\cap \left(\partial \{u>0\}\cup \partial \{u<0\}\right), 
\end{eqnarray}
where 
\[
\Psi(|\nabla u|^2)= 2|\nabla u|^2F'(|\nabla u|^2)-F(|\nabla u|^2).
\]
If $q\geq 1$ then the right hand side of the equation \eqref{E-L-F} is bounded, and one can show that any bounded weak solution $u$ is 
locally $C^{1, \beta}$ regular. 

Throughout this paper we assume that $\lambda$ is chosen so that $|\nabla u|=1$ on the regular portion of the free boundary for the one phase Bernoulli type problem studied in \cite{ACF-Quasi}. 

We would be concerned with a particular class of generalized solutions  of (\ref{E-L-F})-(\ref{E-L-p}) called  
\textit{variational solutions}. 
 
\begin{defn}\label{defn-v-sol}
Let $q\in[0, p)$ then  
$u\in W^{1, p}(\Omega)$ is said to be a variational solution of (\ref{E-L-F}) if it satisfies (\ref{E-L-F}) (or \eqref{E-L-p} for $q=0$) in weak sense and 
for any $\phi\in C^{0,1}_0(\Omega;\R^n)$
\begin{align}\label{stat-point}
 \int_\Omega\left\{\frac{\partial F}{\partial \xi}(\nabla
u)\cdot \na u(y)\na \phi(y)- \big[F(\nabla
u(y))+\lambda(u)\big]\div\phi\right\}dy=0.
\end{align}
Similarly $u$ is said to be a variational solution of (\ref{E-L-p}) 
if $u$ satisfies (\ref{E-L-p}) in weak sense and (\ref{stat-point}). 
\end{defn}

\subsection{First variation}
First we observe  that every minimizer is also a variational solution as the following  computation shows.

\begin{lemma}
Every minimizer is also a variational solution.
\end{lemma}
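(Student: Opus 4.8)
The plan is to take an arbitrary minimizer $u$ of $J$ and produce the inner variation (domain variation) directly, then identify the resulting identity with \eqref{stat-point}. Concretely, fix $\phi\in C^{0,1}_0(\Omega;\R^n)$ and for small $t$ consider the family of diffeomorphisms $\tau_t(y)=y+t\phi(y)$, which for $|t|$ small is a bi-Lipschitz homeomorphism of $\Omega$ onto itself equal to the identity near $\partial\Omega$. Set $u_t(x)=u(\tau_t^{-1}(x))$, so that $u_t$ is an admissible competitor with the same boundary data as $u$, whence $J(u_t)\ge J(u)$ for all small $t$; since $t\mapsto J(u_t)$ will be shown to be differentiable at $t=0$, its derivative there must vanish, and that vanishing derivative is precisely \eqref{stat-point}.

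The key computational steps are the change of variables in each of the two terms of $J$. Writing $y=\tau_t^{-1}(x)$, one has $\nabla_x u_t(x) = (D\tau_t(y))^{-T}\nabla u(y)$ and $dx = \det D\tau_t(y)\,dy$, so
\begin{align}\nonumber
J(u_t)=\int_\Omega \Big[F\big((D\tau_t)^{-T}\nabla u(y)\big)+\lambda\big(u(y)\big)\Big]\det D\tau_t(y)\,dy.
\end{align}
Then I expand to first order in $t$ using $D\tau_t = I + tD\phi$, hence $\det D\tau_t = 1 + t\,\div\phi + o(t)$ and $(D\tau_t)^{-T} = I - tD\phi^{T} + o(t)$. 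Differentiating at $t=0$ and using the chain rule for $F$ (here $\frac{\partial F}{\partial\xi}(\nabla u)$ denotes the gradient of $\xi\mapsto F(\xi)$, equal to $2F'(|\nabla u|^2)\nabla u$ in the radial case) gives
\begin{align}\nonumber
\frac{d}{dt}\Big|_{t=0}J(u_t)=\int_\Omega\Big\{-\frac{\partial F}{\partial\xi}(\nabla u)\cdot\big(D\phi^{T}\nabla u\big) + \big[F(\nabla u)+\lambda(u)\big]\div\phi\Big\}dy,
\end{align}
and since $\frac{\partial F}{\partial\xi}(\nabla u)\cdot(D\phi^{T}\nabla u) = \sum_{i,j}\frac{\partial F}{\partial\xi_i}(\nabla u)\,\partial_j u\,\partial_j\phi_i = \frac{\partial F}{\partial\xi}(\nabla u)\cdot\nabla u\,\nabla\phi$ in the notation of \eqref{stat-point}, setting this derivative to zero yields exactly \eqref{stat-point}, after a harmless overall sign.

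The main obstacle is justifying the differentiation under the integral sign and the $o(t)$ estimates uniformly, given only $u\in W^{1,p}(\Omega)$ rather than smooth $u$: one must check that the difference quotients of the integrand are dominated by an $L^1$ function so that dominated convergence applies. This is where conditions $(\mathbf F_1)$–$(\mathbf F_2)$ enter — the bound $F'(t)\le C_0$ gives linear growth $F(\xi)\le C_0|\xi|^2$ and Lipschitz control of $\xi\mapsto F(\xi)$ on the relevant range, so the first-order Taylor remainder of $F\big((D\tau_t)^{-T}\nabla u\big)$ is controlled by $C t^2\|D\phi\|_\infty^2|\nabla u|^2\in L^1$ (since $\nabla u\in L^p\subset L^2$ locally on the support of $\phi$). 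For the $\lambda(u)$ term one uses $q<p$ together with $u\in W^{1,p}\hookrightarrow L^{p^*}$, or simply local boundedness of $u$, to bound $\lambda(u)\in L^1_{loc}$; the $\det D\tau_t$ factor is a polynomial in $t$ with $L^\infty$ coefficients and causes no trouble. Once this domination is in place the limit passes through the integral, the first variation is as displayed, and the minimality inequality forces it to vanish, completing the proof.
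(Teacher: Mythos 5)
Your proof is correct and follows essentially the same route as the paper: you perform the inner (domain) variation $u_t=u\circ\tau_t^{\pm1}$ with $\tau_t=\mathrm{id}+t\phi$, change variables, expand to first order in $t$, and use minimality to force the first variation to vanish, which is exactly \eqref{stat-point}. The only difference is cosmetic (you deform by $\tau_t^{-1}$ rather than by $\phi_t$, giving the opposite sign on the derivative, as you note), and you add a welcome extra sentence justifying differentiation under the integral via the quadratic growth from $(\mathbf F_1)$--$(\mathbf F_2)$, a step the paper passes over silently.
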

\begin{proof}
For $\phi\in C^{0,1}_0(\Omega:\R^n)$ we put  $u_t(x)=u(x+t\phi(x))$,
with small $t\in\R$. Then
$\phi_t(x)=x+t\phi(x)$ maps $\Omega$ into itself. After change of variables $y=x+t\phi(x)$ we infer

\begin{align}\label{J-t}
\lefteqn{\int_\Omega \bigg[F(\nabla u_t(x))+\lambda(u_t(x))\bigg]dx=}\\\nonumber &
&=\int_\Omega\bigg[F(\nabla u_t(\phi^{-1}_t(y)))+
\lambda(u(y))\bigg]\bigg[1-t\div(\phi(\phi^{-1}_t(y)))+o(t)]\bigg]dy.\nonumber
\end{align}
Here we used the inverse mapping theorem for $\phi_t:x\mapsto y$, in particular a well-known identity
\begin{eqnarray}
\left|\frac{D(x_1,\dots,x_n)}{D(y_1,\dots,y_n)}\right|=
\left|{\frac{D(y_1,\dots,y_n)}{D(x_1,\dots,x_n)}}\right|^{-1}=\frac{1}{1+t\div\phi+o(t)}.\nonumber
\end{eqnarray}
One can easily verify that 
\begin{eqnarray}\nonumber
 \na u_{t}(x)=\na u(\phi_t(x))\left\{\mathbb I+ t\na \phi(x)\right\}
\end{eqnarray}
 with $\mathbb I$ being the unit matrix, hence

\begin{eqnarray}\nonumber
 \nabla u_t(\phi^{-1}_t(y))=\na u(y)\left\{\mathbb I+ t\na \phi(\phi^{-1}_t(y))\right\}.
\end{eqnarray}

This in conjunction with (\ref{J-t}) yields 

\begin{eqnarray}\nonumber
\int_\Omega\Bigg\{\left[\frac{\partial F}{\partial \xi}\big(\nabla
u\left\{\mathbb I+ t\na \phi(\phi_t^{-1}(y))\right\} \big)\cdot \na u(y)\na \phi(\phi^{-1}_t(y))\right]\bigg[1-t\div(\phi(\phi^{-1}_t(y)))+o(t)]\bigg]\\\nonumber
-\bigg[F(\nabla u_t(\phi^{-1}_t(y)))+
\lambda(u(y))\bigg]\bigg[\div(\phi(\phi^{-1}_t(y)))+o(1)]\bigg]\Bigg\}dy\longrightarrow\\\nonumber
\longrightarrow \int_\Omega\left\{\frac{\partial F}{\partial \xi}(\nabla
u)\cdot \na u(y)\na \phi(y)- \big[F(\nabla
u(y))+\lambda(u)\big]\div\phi\right\}dy=0.
\end{eqnarray}
\end{proof}

\subsection{The case $\lambda(u)=\lambda \I {u>0}$}The main result of this section is the following 
\begin{prop}
Let $u$ be a variational solution, and $\lambda(u)=\lambda \I {u>0}$. Then 
\begin{align}\label{mainid}
 \frac d{dr}\left\{\frac 1 {r^n}\int _{B_r} F(|\na u|^2)+\lambda(u)\right\}
 =
 \frac 2{r^n}\int_{\partial B_r}F'(|\na u|^2)\left(u_\nu-\frac ur\right)^2\\\nonumber
 +
\frac 2{r^{n-1}}\int_{\partial B_r}F'(|\na u|^2)\frac {u}{r^2}(u_\nu-\frac ur)\\\nonumber
=
\frac 2{r^n}\int_{\partial B_r}F'(|\na u|^2)\left(u_\nu-\frac ur\right)^2+\\\nonumber
 +
 F_0\frac d{dr}\left\{ \frac1{r^{n+1}} \int_{\partial B_r}u^2\right\}\\\nonumber
 +
T(r), 
\end{align}
where $F_0$ is a constant, $\nu$ is the outer unit normal and 
\begin{equation}
T(r)=\frac 2{r^{n-1}}\int_{\partial B_r}(F'(|\na u|^2)-F_0)\frac {u}{r^2}(u_\nu-\frac ur).
\end{equation}
\end{prop}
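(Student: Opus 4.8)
The plan is to start from the domain variation identity \eqref{stat-point}, which every variational solution satisfies, and insert into it a carefully chosen family of radial vector fields $\phi$. The natural choice is $\phi(y)=\eta(|y|)\,y$ for a Lipschitz cutoff $\eta$ approximating $\chi_{B_r}$; with this choice $\div\phi = n\eta + |y|\eta'$ and $\nabla\phi = \eta\,\mathbb{I} + \eta'\,\frac{y\otimes y}{|y|}$, so the term $\frac{\partial F}{\partial\xi}(\nabla u)\cdot\nabla u\,\nabla\phi$ produces $2F'(|\nabla u|^2)|\nabla u|^2\eta + 2F'(|\nabla u|^2)u_\nu^2|y|\eta'$, where I write $\frac{\partial F}{\partial\xi}(\nabla u)=2F'(|\nabla u|^2)\nabla u$ and $u_\nu = \nabla u\cdot y/|y|$. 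Letting $\eta\to\chi_{B_r}$ so that $\eta'\to-\delta_{\partial B_r}$, \eqref{stat-point} collapses to the classical Rellich–Pohozaev type identity
\begin{align}\nonumber
n\int_{B_r}\big[F(|\nabla u|^2)+\lambda\I{u>0}\big]
= 2r\int_{\partial B_r}F'(|\nabla u|^2)|\nabla u|^2
- r\int_{\partial B_r}\big[F(|\nabla u|^2)+\lambda\I{u>0}\big]
- 2r\int_{\partial B_r}F'(|\nabla u|^2)u_\nu^2.
\end{align}

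Next I would differentiate the quantity $M(r):=r^{-n}\int_{B_r}[F(|\nabla u|^2)+\lambda\I{u>0}]$ directly: $\frac{d}{dr}M(r) = -\frac{n}{r^{n+1}}\int_{B_r}[\cdots] + \frac{1}{r^n}\int_{\partial B_r}[\cdots]$, and substitute the integrated identity above for the volume term. This yields
\begin{align}\nonumber
\frac{d}{dr}M(r) = \frac{2}{r^n}\int_{\partial B_r}F'(|\nabla u|^2)\big(|\nabla u|^2 - u_\nu^2\big)\quad\text{(tangential gradient)} \ \text{(incomplete---see below)}.
\end{align}
The correct bookkeeping is the standard one: one wants to complete the square $u_\nu^2 - 2\frac{u}{r}u_\nu + \frac{u^2}{r^2} = (u_\nu - u/r)^2$, so I would add and subtract the cross term $\frac{2}{r^{n-1}}\int_{\partial B_r}F'(|\nabla u|^2)\frac{u}{r^2}u_\nu$ and the square term, exactly as in the linear ACF computation. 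After rearranging, the tangential-gradient contributions and the $u^2/r^2$ contributions must be shown to cancel against the boundary integral of $F$ and $\lambda\I{u>0}$; this cancellation is where the structure of $F$ — in particular that $|\nabla u|^2 F'(|\nabla u|^2)$ and $F(|\nabla u|^2)$ appear with the specific coefficients forced by the Euler–Lagrange equation — is used, together with the homogeneity degree one of the test field. The outcome is the first displayed equality in \eqref{mainid}.

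For the second and third equalities it remains to isolate the linearization of $F'$ at a reference value. I would set $F_0$ to be the appropriate constant (morally $F'$ evaluated at the squared slope of the blow-up, so that $\Psi$ matches $\lambda$; concretely $F_0 = F'(t_0)$ for the $t_0$ determined by the free boundary condition in \eqref{E-L-p}), write $F'(|\nabla u|^2) = F_0 + (F'(|\nabla u|^2) - F_0)$ inside the cross term $\frac{2}{r^{n-1}}\int_{\partial B_r}F'(|\nabla u|^2)\frac{u}{r^2}(u_\nu - u/r)$, and recognize the $F_0$ piece as a total $r$-derivative: a direct computation gives $\frac{d}{dr}\big\{r^{-(n+1)}\int_{\partial B_r}u^2\big\} = r^{-(n+1)}\int_{\partial B_r}\big(2u u_\nu/r - (n+1-(n-1))u^2/r^2 + \dots\big)$ — more precisely, using $\frac{d}{dr}\int_{\partial B_r}u^2 = \int_{\partial B_r}(2uu_\nu + \tfrac{n-1}{r}u^2)$ one checks that $\frac{2}{r^{n+1}}\int_{\partial B_r}u(u_\nu - u/r) = \frac{d}{dr}\big\{r^{-(n+1)}\int_{\partial B_r}u^2\big\}$ up to the constant factor absorbed into $F_0$. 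The leftover $(F' - F_0)$ piece is precisely $T(r)$.

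The main obstacle is not any single computation but getting the bookkeeping of the several boundary terms to close exactly — in particular verifying that the $F$-boundary-integral and $\lambda\I{u>0}$-boundary-integral terms produced by differentiating $M(r)$ and by the Pohozaev identity combine with the tangential-gradient terms to leave only the clean squares $(u_\nu - u/r)^2$ and the cross term, with no residual first-order-in-$\nabla u$ error. A secondary technical point is justifying the passage $\eta\to\chi_{B_r}$ in \eqref{stat-point} for a.e. $r$, which is routine from $u\in W^{1,p}$ and the coarea formula but should be noted. Once the first equality is in hand, the identification of the total-derivative term and the definition of $T(r)$ are purely algebraic and pose no difficulty.
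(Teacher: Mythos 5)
Your overall route is the same as the paper's: feed a radial test field into the domain-variation identity \eqref{stat-point}, obtain a Pohozaev-type identity, then differentiate $M(r)=r^{-n}\int_{B_r}\bigl[F(|\nabla u|^2)+\lambda\chi_{\{u>0\}}\bigr]$ and complete the square. (The paper plugs in the Lipschitz field $\phi(x)=x(r-|x|)\chi_{B_r}$ directly and then differentiates in $r$, whereas you pass to the singular limit $\eta\to\chi_{B_r}$; this is only a cosmetic difference.) However, the argument as written cannot close, for two reasons.

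First, your Pohozaev identity is incorrect. The correct one, obtained either from the paper's $\phi=x(r-|x|)\chi_{B_r}$ after differentiating in $r$, or from your $\eta\to\chi_{B_r}$ limit, reads
\begin{equation*}
n\int_{B_r}\bigl[F(|\nabla u|^2)+\lambda\chi_{\{u>0\}}\bigr]
-r\int_{\partial B_r}\bigl[F(|\nabla u|^2)+\lambda\chi_{\{u>0\}}\bigr]
=\int_{B_r}2F'(|\nabla u|^2)|\nabla u|^2
-2r\int_{\partial B_r}F'(|\nabla u|^2)u_\nu^2.
\end{equation*}
The $|\nabla u|^2$ term is a \emph{volume} integral (you have $2r\int_{\partial B_r}F'|\nabla u|^2$), and the boundary term in $[F+\lambda]$ enters with the opposite sign to what you wrote. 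Substituting the correct identity into $\frac{d}{dr}M(r)$ gives
\begin{equation*}
\frac{d}{dr}M(r)=\frac{2}{r^n}\int_{\partial B_r}F'(|\nabla u|^2)u_\nu^2-\frac{2}{r^{n+1}}\int_{B_r}F'(|\nabla u|^2)|\nabla u|^2,
\end{equation*}
which still contains a volume integral; no tangential-gradient boundary term ever appears, and there is nothing for the $F$- or $\lambda$-boundary integrals to cancel against (they have already cancelled exactly at this stage).

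Second, and more fundamentally, the step that actually closes the argument is absent from your proposal. One must convert $\int_{B_r}2F'(|\nabla u|^2)|\nabla u|^2$ into the boundary flux $\int_{\partial B_r}2F'(|\nabla u|^2)\,u\,u_\nu$. The paper does this by invoking that $\div\bigl(F'(|\nabla u|^2)\nabla u\bigr)$ is a measure supported on $\partial_{\mathrm{red}}\{u>0\}$ (the citation to Alt--Caffarelli--Friedman at that line): testing against $u$, the interior contribution $\int_{B_r}u\,\div(F'\nabla u)$ vanishes because $u\equiv0$ on the support of the measure, and the divergence theorem leaves only the boundary flux. After that, the first equality in \eqref{mainid} is immediate from $u_\nu(u_\nu-\tfrac{u}{r})=(u_\nu-\tfrac{u}{r})^2+\tfrac{u}{r}(u_\nu-\tfrac{u}{r})$. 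Your appeal to a cancellation ``where the structure of $F$ is used, together with the homogeneity degree one of the test field'' misdiagnoses the mechanism: the decisive input is the Euler--Lagrange equation combined with the vanishing of $u$ on the free boundary, not any algebraic interplay between tangential gradients and the boundary integrals of $F$ and $\lambda$.

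Your treatment of the second and third equalities (splitting $F'=F_0+(F'-F_0)$ in the cross term and recognizing $\tfrac{2}{r^{n+1}}\int_{\partial B_r}u(u_\nu-u/r)$ as $\tfrac{d}{dr}\bigl\{r^{-(n+1)}\int_{\partial B_r}u^2\bigr\}$, leaving $T(r)$ as the remainder) is correct and matches the paper.
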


\begin{proof}
For 
\begin{displaymath}
\phi(x)=\left\{ \begin{array}{lll} 
                 x(r-|x|)\ & {\rm for}\ x\in B_r,\\
0\ & {\rm otherwise},
                \end{array}\right.
\end{displaymath}
we have that $\partial_{x_j}\phi^i(x)=\delta_{ij}(r-|x|)-\frac{x_ix_j}{|x|}$. 
Hence \eqref{stat-point} becomes 

\begin{align*}
0=\int_{B_r} 2 F'(|\nabla u|^2)\nabla u  \cdot \left(\nabla u \left[\mathbb I(r-|x|)-\frac{x\otimes x}{|x|}\right]\right)\\ 
-\int_{B_r}(F(|\nabla u|^2)+\lambda(u))(n(r-|x|)-|x|).
\end{align*}
Differentiating  the above identity in $r$, we get for a.e. $r$
\begin{eqnarray*}
 n\int_{B_r} F(|\na u|^2)+\lambda(u)-r\int_{\partial B_r} F(|\na u|^2)+\lambda(u)
 =\\
-\int_{\partial B_r}(2F'(|\na u|^2)\na u\cdot x)\frac{\na u\cdot x} r
+
\int_{B_r} 2F'(|\na u|^2)|\na u|^2.
\end{eqnarray*}
After multiplying both sides  by $r^{-n-1}$, and using 
$\div (F'(|\nabla u|^2)\nabla u)=\lambda^*\H^{n-1}\L\partial_{\rm{red}}\{u>0\}$ \cite{ACF-Quasi}
we obtain from the previous identity, 

\begin{align}\label{mainid}
 \frac d{dr}\left\{\frac 1 {r^n}\int _{B_r} F(|\na u|^2)+\lambda(u)\right\}
 =
 \frac 2{r^n}\int_{\partial B_r}F'(|\na u|^2)\left(u_\nu-\frac ur\right)^2\\\nonumber
 +
\frac 2{r^{n-1}}\int_{\partial B_r}F'(|\na u|^2)\frac {u}{r^2}(u_\nu-\frac ur)\\\nonumber
=
\frac 2{r^n}\int_{\partial B_r}F'(|\na u|^2)\left(u_\nu-\frac ur\right)^2+\\\nonumber
 +
 F_0\frac d{dr}\left\{ \frac1{r^{n+1}} \int_{\partial B_r}u^2\right\}\\\nonumber
 +
T(r), 
\end{align}
where $F_0$ is a constant, $\nu$ is the outer unit normal and 
\begin{equation}
T(r)=\frac 2{r^{n-1}}\int_{\partial B_r}(F'(|\na u|^2)-F_0)\frac {u}{r^2}(u_\nu-\frac ur)
\end{equation}
is the error term. 
\end{proof}
%
%
\vspace{1cm}
\section{Helmhotz-W\'eyl decomposition and the ghost function}
The purpose of this section is to find a convenient form for the last integral in (\ref{mainid}) by 
using the Helmholtz-W\'eyl decomposition. 
For $z\in \fb u\cap B_1$, consider the vectorfield 
\begin{eqnarray}
\displaystyle U^z(x)=
(F'(|\nabla u(x)|^2)-F_0)\frac{2u(x)}{|x-z|^2} \left(\nabla u(x)-u(x)\frac{x-z}{|x-z|^2}\right),
\end{eqnarray}
where $F_0$ is some fixed constant.
Then the error term in \eqref{mainid}
can be written as 
\begin{equation}
T(r)=\frac1{r^{n-1}}\int_{\partial B_r} U^z(x)\cdot \nu.
\end{equation}

For $s\in(1, n)$ we have that 
\begin{eqnarray}\label{U-est}
 &&\int_{B_R(z)}|U^z(x)|^sdx=\\\nonumber
 &&\int_{B_R}\left|(F'(|\nabla u(x+z)|^2)-F_0)\frac{u(x+z)}{|x|^2} \left(\nabla u(x+z)-u(x+z)\frac{x}{|x|^2}\right)\right|^sdx\\\nonumber
&&\leq  C\int_{B_R}\frac{dx}{|x|^s}\\\nonumber
&&=C\frac{R^{n-s}}{n-s},
\end{eqnarray}
with $C$ depending on $\|\nabla u \|_{\infty}$.

Thus $U^z(x)\in L^s(B_R), \forall s\in (1, n)$. Recall the Helmholtz-W\'eyl decomposition for $L^s(D)$, with Lipschitz $\partial D$,  see \cite{FM}
\begin{eqnarray}\label{Helm-1}
 L^s(D)=\Theta(D)\bigoplus H(D), 
\end{eqnarray}
where $\Theta(D)=\{V\in L^s(D):V=\na \phi, \theta\in W^{1, s}(D)\}, H(D)=\{h\in L^s(D): \div h=0, h\cdot \nu=0\}$,
and $\nu$ is the unit outward normal to $\partial D$.
Furthermore if  $V\in L^s(D)$ is decomposed according to (\ref{Helm-1}) $V=\na\phi+h$ by estimates (3.2) and (3.6) of \cite{FM} it follows
that there exists a constant $C_0$ depending only on dimension and the domain $D$ such that 
\begin{eqnarray}\label{helm-2}
 \|\phi\|_{W^{1,s}(D)}\leq C_0 \| V\|_{L^s(D)}.
\end{eqnarray}

Note that $\phi$ is a weak solution to 
the Neumann problem 
\begin{equation}
\Delta \phi=\div U^z, \quad \nabla \phi\cdot\nu |_{\partial D}=U^z\cdot \nu.
\end{equation}

Taking $D=B_1(z), $ we have 
\[
U^z(x)=\na \phi^z(x)+h^z(x)
\]
 for some $\phi^z\in W^{1, s}(B_1), h^z\in H(B_1)$. 

\begin{defn}
$\phi^z(x)$ is called the the ghost function at the free boundary point $z$.\end{defn}

Using the decomposition for $U^z$ we can compute for $0<r<1$

\begin{eqnarray}
 \frac 1{r^{n-1}}\int_{\partial B_r(z)}U^z(x)\cdot \nu&=&\frac 1{r^{n-1}}\int_{\partial B_r(z)}(\na \phi^z+h^z)\cdot \nu\\\nonumber
&=&\frac 1{r^{n-1}} \int_{\partial B_r(z)}\na \phi^z \cdot \nu +\frac 1{r^{n-1}}\int_{B_r(z)}\div h^z\\\nonumber
&=&\frac d{dr}\left(\frac 1{r^{n-1}}\int_{\partial B_r(z)}\phi^z\right).
\end{eqnarray}

Moreover,
\begin{equation}\label{Raj}
\frac d{dr}\fint_{B_r(z)}\phi^z=\fint_{B_r(z)}U^z(x)\cdot\frac{x-z}{|x-z|}dx. 
\end{equation}

The crucial property is the following scaling invariance that preserves the possible logarithmic singularity of $\phi^z$:
if $\phi^z$ is the ghost function for $U^z$ for some $z\in \fb u\cap B_1$, then 
the scaled function $\phi_\theta^z(x)=\phi^z(z+\theta x), \theta>0$ solves 
$\Delta \phi_\theta^z=\div U_\theta^z$ where 
\begin{align}\label{U-scale}
U_\theta^z(x)=\frac1{|x|}
(F'(|\nabla u(z+\theta x)|)-F_0)\frac{u(z+\theta x)}{|\theta x|} \left(\nabla u(z+\theta x)-u(z+\theta x)\frac{\theta x}{|\theta x|^2}\right), 
\end{align}
and consequently, in view of Lipschitz regularity  of $u$
\begin{equation}
|U_\theta^z(x)|\sim \frac C{|x-z|}
\end{equation}
near the free boundary point $z$ with $C$ being independent of $\theta$.

Returning to (\ref{mainid})  and introducing 

\begin{equation}\label{A-defn}
A(z, r)= \frac 1 {r^n}\int _{B_r(z)} F(|\na u|^2)+\lambda(u)
-\frac{F_0}{r^{n+1}} \int_{\partial B_r(z)}u^2
-\frac1{r^{n-1}} \int_{\partial B_r(z)}\phi^z
\end{equation}
we see that 
\begin{equation}
A'(z, r)= \frac 2{r^n}\int_{\partial B_r(z)}F'(|\na u|^2)\left(u_\nu-\frac ur\right)^2\ge 0. 
\end{equation}

%
%
\section{The properties of $\phi$}
In this section we prove some uniform estimates for $\phi$
that will be used to control $A(z, r)$ as $r\downarrow 0.$

\subsection{A BMO estimate}
\begin{lemma}\label{lem-lem}
 Let $\phi^z$ be defined as above. Then for any $z\in \fb u \cap B_R, B_{2R}\subset \Omega$
 there is a constant $c_{\rm  {\scalebox{0.6}{BMO}}}=c_{\rm  {\scalebox{0.6}{BMO}}}(F, n, \lambda)$ such that 
 \begin{equation}
 \fint_{B_r(z)}|\phi^z-\bar \phi_{r}^z|^2\le c_0, \quad \bar\phi_r^z=\fint_{B_r(z)}\phi^z.
 \end{equation}
\end{lemma}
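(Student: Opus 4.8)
The plan is to establish the BMO bound for the ghost function $\phi = \phi^z$ by exploiting the $W^{1,s}$-estimate \eqref{helm-2} together with the precise $L^s$-decay of $U^z$ in \eqref{U-est}, and then converting an interior $W^{1,s}$-type control into an $L^2$-oscillation bound at every scale. Since the statement is scale-free (the right side $c_0$ is independent of $r$ and $z$), I would first \emph{reduce to unit scale}. Fix $z \in \fb u \cap B_R$ and $r \in (0,1)$, and set $\psi(x) = \phi^z(z + rx)$, which by the scaling discussion around \eqref{U-scale} solves $\Delta \psi = \div U_r^z$ on $B_1$ with Neumann data $U_r^z \cdot \nu$, and crucially $|U_r^z(x)| \le C/|x|$ on $B_1$ with $C$ depending only on $\|\nabla u\|_\infty$, $F$, $\lambda$ — \emph{not} on $r$ or $z$. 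Then $\fint_{B_r(z)} |\phi - \bar\phi_r|^2 = \fint_{B_1} |\psi - \bar\psi|^2$ with $\bar\psi = \fint_{B_1}\psi$, so it suffices to bound $\|\psi - \bar\psi\|_{L^2(B_1)}$ by a universal constant.

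For the unit-scale estimate I would argue as follows. Since $|U_r^z| \le C/|x|$ we have $U_r^z \in L^s(B_1)$ for every $s \in (1,n)$, with $\|U_r^z\|_{L^s(B_1)} \le C(s)$ uniformly, exactly as in \eqref{U-est}. Now $\psi$ is (up to an additive constant, which drops out of the oscillation) the gradient-potential in the Helmholtz--Wéyl decomposition of $U_r^z$ on $B_1$, so by \eqref{helm-2} applied on $D = B_1$,
\begin{equation}\nonumber
\|\psi - \bar\psi\|_{W^{1,s}(B_1)} \le \| \nabla \psi \|_{L^s(B_1)} + \| \psi - \bar\psi\|_{L^s(B_1)} \le C_0 \| U_r^z \|_{L^s(B_1)} \le C(s),
\end{equation}
where the middle inequality uses Poincaré's inequality on $B_1$ (legitimate after subtracting the mean). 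Choosing any $s$ with $n/2 < s < n$ (possible precisely because we work in low dimension, but in fact $s$ close to $n$ always works for the embedding we need when $n \ge 2$), the Sobolev embedding $W^{1,s}(B_1) \hookrightarrow L^2(B_1)$ — valid as soon as $s \ge 2n/(n+2)$, which holds for $s$ near $n$ — gives $\|\psi - \bar\psi\|_{L^2(B_1)} \le C \|\psi - \bar\psi\|_{W^{1,s}(B_1)} \le c_0$. Unwinding the rescaling yields the claim with $c_0 = c_0(F,n,\lambda)$.

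The main obstacle is \emph{uniformity in the domain and in $z$}: the constant $C_0$ in \eqref{helm-2} a priori depends on the Lipschitz domain $D$, and here $D = B_1(z)$ for a moving center $z$. This is harmless because all the balls $B_1(z)$ are congruent by translation, so $C_0$ is genuinely one fixed constant; I would state this explicitly. A subtler point is that \eqref{helm-2} as quoted bounds $\|\phi\|_{W^{1,s}(D)}$, not $\|\phi - \bar\phi\|_{W^{1,s}(D)}$, but since the decomposition only determines $\phi$ modulo constants, one is free to normalize $\fint_D \phi = 0$ before invoking it — or simply note that the oscillation is translation-invariant under $\phi \mapsto \phi + c$. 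The one genuine technical check is that the decay rate $|U_r^z| \le C/|x|$ survives the rescaling with a constant independent of $\theta = r$; this is precisely the "crucial scaling invariance" highlighted after \eqref{U-scale}, so I would cite that computation rather than redo it. Finally, I would remark that this BMO bound is exactly what is needed to control $\frac{1}{r^{n-1}}\int_{\partial B_r(z)}\phi^z$, and hence the term $T(r)$, as $r \downarrow 0$ in Theorem \ref{theorem-2}, closing the loop with \eqref{A-defn}.
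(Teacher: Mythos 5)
Your argument is sound at the top scale $r=1$: there $D=B_1(z)$ is precisely the domain on which the Helmholtz--W\'eyl decomposition was taken, so $\phi^z$ really is the gradient potential, \eqref{helm-2} applies, and together with the uniform bound $\|U^z\|_{L^s(B_1(z))}\le C$ from \eqref{U-est} and the Sobolev embedding $W^{1,s}\hookrightarrow L^2$ for $s\ge 2n/(n+2)$ (a range that intersects $(1,n)$ for every $n\ge 2$) you do obtain $\fint_{B_1(z)}|\phi^z-\bar\phi_1|^2\le c_0$ uniformly in $z$. The gap is at the smaller scales $r<1$. You assert that $\psi(x)=\phi^z(z+rx)$ is, up to an additive constant, the gradient potential of $U^z_r$ on $B_1$. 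That is not so: the Helmholtz--W\'eyl decomposition carries a Neumann boundary condition on the domain on which it is performed, and after rescaling $\psi$ solves $\Delta\psi=\div U_r^z$ on the \emph{large} ball $B_{1/r}$ with Neumann data only on $\partial B_{1/r}$; its restriction to $B_1$ satisfies no boundary condition on $\partial B_1$, so \eqref{helm-2} with $D=B_1$ simply does not apply to it. (The remark after \eqref{U-scale} claims only that $\phi^z_\theta$ solves the rescaled \emph{interior} equation $\Delta\phi^z_\theta=\div U^z_\theta$; it does not say $\phi^z_\theta$ is the ghost function of $U^z_\theta$ at unit scale.) If instead you performed a fresh Helmholtz--W\'eyl decomposition of $U^z$ on the small ball $B_r(z)$, your scaling would give a uniform oscillation bound for the resulting \emph{new} potential $\phi_r$, but $\phi^z-\phi_r$ is an uncontrolled harmonic function on $B_r(z)$, so nothing transfers back to $\phi^z$. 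And the purely interior estimate $\|\nabla\psi\|_{L^s(B_{1/2})}\lesssim \|U^z_r\|_{L^s(B_1)}+\inf_c\|\psi-c\|_{L^s(B_1)}$ is circular, since after Poincar\'e the second term is essentially the oscillation you are trying to bound.

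The paper sidesteps this obstruction with a compactness argument of a genuinely different character: negate the (dyadic) BMO bound along a sequence $(u_m,z_m,k_m)$, normalize by the presumed-large oscillation to form $\tilde\Phi_m$, use a Caccioppoli inequality to pass to a weak limit, observe that the normalized right-hand side $\tilde U_m$ tends to zero in $L^2$ so the limit $\tilde\Phi_0$ is an entire harmonic function with uniformly controlled quadratic averages on dyadic balls, and invoke Liouville to contradict the normalization $\fint_{B_{1/2}}\tilde\Phi_0^2=1$. That scheme uses the scaling invariance of $U^z$ only to keep the unnormalized right-hand side bounded before dividing by the blowing-up quantity; it never needs to re-identify the rescaled $\phi^z$ as a Helmholtz potential at the small scale. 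To repair your proposal you would have to adopt such a blow-up/Liouville scheme (or iterate a decay estimate as in Lemma \ref{lem-iteration} under the smallness hypothesis \eqref{sch}); the one-shot potential-theoretic estimate cannot be made to close across scales.
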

\begin{proof}
Without loss of generality we may assume $z=0$. It is enough to show that

\begin{align}
 &\int_{B_{k+1}(z)}| \phi -\bar \phi_{k+1} |^2
 \leq \\\nonumber
& \max\left(Cr_{k+1}^n, 
 \frac1{2^{n}}\int_{B_k(z)} |\phi -\bar \phi_{k} |^2, 
 \dots, 
 \frac1{2^{n(\ell+1)}}\int_{B_{k-\ell}(z)} |\phi -\bar \phi_{k-\ell} |^2, 
 \dots, 
  \frac1{2^{n(k+1)}}\int_{B_{1}(z)} |\phi -\bar \phi_{1} |^2
 \right),
\end{align}
for some fixed constant $C$. 
Here $r_k=2^{-k}, B_k(z)=B_{r_k}(z)$.
If the claim fails then there are free boundary points $z_{k_m}\in B_R$, 
solutions $u_m$ to the free boundary problem, and integers $k_m$ such that 
$m\to \infty$ and  
\begin{align}\label{negate}
 &\int_{B_{k+1}(z_m)}| \phi_m -\bar \phi_{m,k_m+1} |^2
 > \\\nonumber
& \max\Bigg(mr_{k_m+1}^n, 
 2^{-n}\int\limits_{B_{k_m}(z_m)} |\phi_m -\bar \phi_{m,k_m} |^2, 
 \dots, 
 2^{-n(k_m+1)}\int\limits_{B_{1}(z_m)} |\phi_m -\bar \phi_{1} |^2
 \Bigg).
\end{align}
For every $m$ as above, let us define the following scaled functions
\[
\Phi_m(x)=\frac{\phi(z_m+r_{k_m}x)}
{\left(\fint\limits_{B_{k_m+1}(z_m)} |\phi_m -\bar \phi_{m,k_m+1} |^2\right)^{\frac12}}.
\]
Then it is easy to see that 
\begin{equation}
\fint_{B_{\frac12}}|\Phi_m-\bar\Phi_{m, \frac12}|^2=1,
\end{equation}
and, for $1\le \ell \le k_m$, it follows from \eqref{negate} that the following estimate holds
\begin{equation}
\fint_{B_{2^\ell}} |\Phi_m-\bar\Phi_{m, 2^\ell} |^2\le 2^n.
\end{equation}
Moreover, \eqref{negate} also implies 
that 

\begin{equation}
\fint_{B_{k_m+1}(z_m)}| \phi_m -\bar \phi_{m,k_m+1} |^2>\frac m{|B_1|}\to \infty.
\end{equation}
From here we infer that 
\begin{equation}
\fint_{B_{2^\ell}} (\Phi_m-\bar\Phi_{m, \frac12})^2\le (\bar\Phi_{m, 2^\ell}-\bar\Phi_{m, \frac12})^2 +2^n.
\end{equation}
From the definition of the ghost function and 
its scaling properties \eqref{U-scale}, it follows from \eqref{Raj} that 
\begin{equation}
\bar\Phi_{m, 2^\ell}= \bar\Phi_{m, \frac12}+O(\frac{\ell^2}{ m^2}),
\end{equation}
which yields 
\begin{equation}
\fint_{B_{2^\ell}} (\Phi_m-\bar\Phi_{m, \frac12})^2
\le 
O(\frac{\ell^2}{ m^2}) +2^n.
\end{equation}

Now introducing $\tilde\Phi_m=\Phi_m-\bar\Phi_{m, \frac12}$, we can summarize its properties as follows 
\begin{eqnarray}
\fint_{B_{\frac12}}\tilde\Phi_m=0, \quad 
\fint_{B_{\frac12}}(\tilde\Phi_m)^2=1,\\\label{ell-m}
\fint_{B_{2^\ell}} (\tilde\Phi_m)^2\le O(\frac{\ell^2}{ m^2})+2^n, \\\label{scaled-PDE}
\Delta \tilde\Phi_m=\div\tilde U_m,
\end{eqnarray}
where 
\begin{equation}
\tilde U_m
=
\frac{U_m}{\left(\fint_{B_{k+1}(z_m)}| \phi_m -\bar \phi_{m,k_m+1} |^2\right)^2}, 
\end{equation}
and 
\begin{equation}
\int_{B_M}|\tilde U_m|^2\le \frac C m M^{n-2}, \quad M>0.
\end{equation}

If we use   $\zeta^2\tilde \Phi_m, \zeta\in C_0^\infty(\R^n)$,  in the weak formulation of 
\eqref{scaled-PDE}, then we conclude after some
simple estimates the Caccioppolli inequality 
\begin{equation}
\int_{}|\nabla \tilde \Phi_m|^2\zeta^2\le C\int_{}|\tilde U_m|^2\zeta^2+(\zeta^2+|\nabla \zeta|^2)\tilde \Phi_m^2
\end{equation}
with some tame constant $C$.
For a suitable choice of the test function $\zeta$, with $\mbox{supp}\zeta\subset B_{2^{\ell+1}}$ this  and \eqref{ell-m} yield
\begin{equation}
\int_{B_{2^\ell}}|\nabla \tilde \Phi_m|^2\le \frac{C2^{\ell(2n-2)}}{m} +C2^{\ell(n-2)}\left(O(\frac{\ell^2}{ m^2})+2^n\right), 
\end{equation}
for fixed $\ell$ and large $m$.

Using a simple compactness argument we see that there is a harmonic function $\tilde\Phi_0(x), x\in \R^n$
such that 
\begin{eqnarray}\label{Zac1}
\fint_{B_{\frac12}}\tilde\Phi_0=0, \\
\label{Zac2}
\fint_{B_{\frac12}}(\tilde\Phi_0)^2=1,\\
\fint_{B_{2^\ell}} (\tilde\Phi_0)^2\le 2^n.
%
\end{eqnarray}
For any $x\in \R^n$
\begin{equation}
|\tilde\Phi_0(x)|=\left|\fint_{B_{|x|}(x)}\tilde\Phi_0\right|
\le 
4^n\fint_{B_{2|x|}(0)}|\tilde\Phi_0|\le 8^n.
\end{equation}
Consequently, Liouville's theorem implies that $\tilde\Phi_0$ is constant 
which is in contradiction with \eqref{Zac1} and \eqref{Zac2}. 
\end{proof}

\subsection{Improvement of BMO estimate}
At a regular free boundary point $z$ we have 
\begin{equation}\label{eq:bunny}
\frac 1{r^{n-1}}\int_{\partial B_r(z)}\phi^z=a_0(z)+a_1(z)r+O(r^2),
\end{equation}
where $a_0(z)$ is finite, see \eqref{mario}. At this point we don't know
whether $a_0(z)$ remains bounded uniformly in $z$. Hence, if we assume 
that for some sequence $z_k$ of regular free boundary points 
$\lim_{k\to\infty} |a_0(z_k)|=\infty$ then there are two scenarios: either 
$z_k\to z_0$ for a subsequence and $z_0$ is a regular free boundary point, or 
$z_0$ is not a regular free boundary point. The first one can be eliminated due to the fact that  
$$
\limsup_{r\to 0}\frac1{r^{n-2}}\int_{B_r(z_0)}|U^{z_0}|^2=0.
$$ 
To rule out the 
second scenario we need to assume that $U^z$ has small norm so that 
there cannot be any singular mass build up at $z_0$ coming from the regular free boundary points. Note that, $a_0(z)$ is constant for all free boundary point $z$ if $F(t)=t$.

%
%
\vspace{1cm}
\section{The main result }
In this section we examine the behavior of the ghost function at 
irregular free boundary points. As we observed 
$\phi^z$ behaves like a VMO function at the regular free boundary points, see \eqref{eq:bunny}.
As the next lemma shows the VMO regularity of $\phi^z$ at an
 irregular free boundary point    $z$ is enough to conclude the homogeneity.  
\begin{lemma}
Let $z=0$ be a free boundary point and 
\begin{equation}\label{Btch}
\lim_{r_k\downarrow 0}\frac1{r^n_k}\int_{B_{r_k}} \left|\phi-\fint_{B_{r_k}}\phi\right|^2=0
\end{equation}
for some sequence $r_k\to 0$. Let $u^o$ be a blow up corresponding to 
$r_k$. Then any blow up of $u^o$ is a homogeneous function of degree one.  

\end{lemma}

\begin{proof}
For a given pair of numbers $b>a>0$ we compute 
\begin{align*}
\frac1{(b r_k)^n}\int_{B_{br_k}}\phi-\frac1{(a r_k)^n}\int_{B_{ar_k}}\phi
=
\int_{ar_k}^{br_k}\frac d{dt}\left\{\frac1{t^n}\int_{B_t}\right\}dt\\
=
\int_{ar_k}^{br_k}\frac1{t^{n+1}}\int_{B_t}\nabla \phi(x)\cdot xdxdt\\
=
\int_{ar_k}^{br_k}\frac1{t^{n+1}}\int_0^t\rho\int_{\partial B_\rho}\nabla \phi(x)\cdot \nu dSd\rho dt\\
=
\int_{ar_k}^{br_k}\frac1{t^{n+1}}\int_0^t\rho\int_{\partial B_\rho}U[u](x)\cdot \nu dSd\rho dt.
\end{align*}
After rescaling $u_k(x)=u(r_kx)/r_k$ we obtain 
\begin{align}\label{mlla}
\frac1{(b r_k)^n}\int_{B_{br_k}}\phi-\frac1{(a r_k)^n}\int_{B_{ar_k}}\phi
=
\int_a^b\frac1{s^{n+1}}\int_{B_s}U[u_k](y)dyds.
\end{align}
Note that 
\begin{align*}
\left|\frac1{(b r_k)^n}\int_{B_{br_k}}\phi-\frac1{(a r_k)^n}\int_{B_{ar_k}}\phi\right|
\le 
\frac1{(r_ka)^n}\int_{B_{r_ka}}\left |\phi-\frac1{(r_kb)^n}\int_{B_{r_kb}}\phi\right|\\
\le
\left(\frac ba\right)^n\frac1{(r_kb)^n}\int_{B_{r_kb}}\left |\phi-\frac1{(r_kb)^n}\int_{B_{r_kb}}\phi\right|.
\end{align*}
Taking $b=1, 0<a<1$ we get the following inequality for the 
scaled functions $\phi_k(x)=\phi(r_kx)$ we have from \eqref{Btch}
\begin{align}\label{blla}
\left| \int_a^1\frac1{s^{n+1}}\int_{B_s}U[u_k](y)dyds\right|
\le 
\left|\int_{B_1}\phi_k-\frac1{a^n}\int_{B_a}\phi_k\right|\\\nonumber
\le \frac1{a^n}\int_{B_{r_k}}\left |\phi-\frac1{r_k^n}\int_{B_{r_k}}\phi\right|\to 0
\end{align}
as $r_k\to 0$. Applying the BMO estimate to the scaled functions
$\tilde \phi=\phi_k(x)-\int_{B_1}\phi_k$ we get the 
uniform estimate 
\begin{equation}\label{Btch2}
\int_{B_1}|\tilde \phi_k|^2\le c_0.
\end{equation}
From here we have that $\tilde \phi_k\to \phi_0$ strongly in $L^q(B_1), 1<q<2$.
Moreover, 
utilizing the scale invariance 
\begin{equation}\label{Btch3}
U[u_{r_k}](x)\to U[u_0](x)
\end{equation}
strongly in $L^1_{loc}(B_1\setminus\{0\})$.
From \eqref{mlla} and \eqref{blla} we infer
\begin{equation}
\int_{B_1}\phi_0=\frac1{a^n}\int_{B_a}\phi_0
\end{equation}
for all $a\in(0, 1)$. 
Therefore, if $u_0$ is a blow up corresponding to the sequence $r_k$, then  
\[
\bar A(u_0, r)=\frac 1 {r^n}\int _{B_r(0)} F(|\na u_0|^2)+\lambda(u_0)
-\frac{F_0}{r^{n+1}} \int_{\partial B_r(0)}u^2_0
\]
is monotone nondecreasing and bounded for $r\in (0, 1).$
From here it the rest runs as  before and we conclude that any blow up of $u_0$ is a homogeneous function of degree one. 
\end{proof}

In view of previous lemma, the remaining case we need to explore further is
\begin{equation}
0<\liminf_{r\downarrow 0} 
\frac1{r^n}\int_{B_{r}(z)} \left|\phi^z-\fint_{B_{r}}\phi^z\right|^2\leq c_0. 
\end{equation}

Under some conditions on $F$, the energy density function, we can show that 
$c_0$ is small, and consequently, $u$ is close to a homogeneous function of degree 
one. Indeed, for $0<\delta<1$, we have from \eqref{mainid}
\begin{align}\label{eq:desmando}
\int_\delta^1\frac1{t^n}\int_{\partial B_t(z)}F'(|\nabla u|^2)\left(u_\nu-\frac u t\right)^2
&=
\int_{\partial B_1(z)}\phi^z-\frac1{\delta^{n-1}}\int_{\partial B_\delta(z)}\phi^z+O(1)\\\nonumber
&=
O(\epsilon^\star\log\frac1\delta), 
\end{align}
where $\epsilon^\star=\sup_{t\in[0, 1]}|F'(t)-F'(1)|$. Note that 
\begin{equation}\label{eq:smallU}
|U^z|\le \frac{\epsilon^\star}{|x-z|}.
\end{equation} 
In fact, as we will see next, this condition alone implies that 
$u$ is close to some homogeneous function of degree one. 
The left hand side of 
\eqref{eq:desmando} can be estimated from below as follows
\begin{align*}
\int_\delta^1 \frac1{t^n}\int_{\partial B_t(z)}F'(|\nabla u|^2)\left(u_\nu-\frac u t\right)^2
&=
\frac1{t^n} \int_{ B_t(z)} F'(|\nabla u|^2)\left(u_\nu-\frac u t\right)^2\Big|_\delta^1
\\
&+
\int_\delta^1 \frac n{t^{n+1}}
\int_{B_t(z)}F'(|\nabla u|^2)\left(u_\nu-\frac u t\right)^2\\
&\ge n\frac1\delta \inf _{t\in [\delta, 1]}\frac1{t^n}\int_{B_t(z)}F'(|\nabla u|^2)\left(u_\nu-\frac u {|x-z|}\right)^2\\
&+ O(1).
\end{align*}

From the last inequality and \eqref{eq:desmando} 
we infer the estimate
\begin{align}\label{Sackss}
\inf _{t\in [\delta, 1]}\frac1{t^n}\int_{B_t(z)}F'(|\nabla u|^2)\left(u_\nu-\frac u {|x-z|}\right)^2
\leq 
\epsilon^\star+O(\frac1{\log\frac1\delta}).
\end{align}
Therefore, if 
$\epsilon^\star $ is sufficiently small then one can check that \eqref{Sackss}
implies that for $\delta\to 0$ there is a blow up of $u$ at $z$, say $u^0$, such that 
\begin{equation}
\inf_{B_1}\left |\frac{ u^0}{|x|}-\nabla u^0\cdot \frac x{|x|}\right|\le \epsilon^\star.
\end{equation}
Thanks to the results from \cite{K}, this last inequality 
also holds in $\R^3$ with infimum replaced by supremum.
More precisely, we have the following 

\begin{lemma}\label{lem:jen}
Let $u$ be a local minimizer in $\R^3$ such that $|\nabla u|\le 1$
and $z\in \fb u$. Suppose \eqref{ffrrtt} is satisfied. 
For any $\delta>0$ there exists $\epsilon >0$ such that 
\begin{equation}
\frac1{r^3}\int_{B_r(z)} |u(x)-\nabla u(x)\cdot (x-z)|\leq \delta r
\end{equation}
whenever 
\begin{equation}\label{eq:pron}
\frac1{r^{}}\int_{B_r(z)}|U^z|^2\le \epsilon.
\end{equation}

\end{lemma}

\begin{proof}
Suppose the assertion of the lemma is false. Then there is 
$\delta_0>0$ and a sequence $r_k\to 0$ so that 
\begin{equation}
\frac1{r_k^{}}\int_{B_{r_k}}|U[u_k](x)|^2=\frac1k\to 0
\end{equation}
for some sequence of minimizers $u_k$, $|\nabla u_k|\le 1$,
but 
\begin{equation}
\frac1{r_k^3}\int_{B_{r_k}(z_k)}|u_k(x)-\nabla u_{k}(x)(x-z_k)|\ge \delta_0r_k.
\end{equation}

Proceeding as in the proof of previous lemma, we can extract a subsequence $r_{k_m}$
such that $\tilde u_{m}(x)=u(z_{k_m}+r_{k_m}x)/r_{k_m}$ converges 
locally uniformly in $C^{0, 1}(\R^3)$ to some local 
minimizer $u_0$ in $\R^3$, such that 
\begin{equation}\label{dashn}
\int_{B_{1}(0)}|u_k(x)-\nabla u_{k}(x)(x-z_k)|^2\ge \delta_0.
\end{equation}
Moreover, $\phi_m(x)=\phi(z_{k_m}+xr_{k_m})\to \phi_0(x)$
strongly in $L^q(B_1), q\in [1, 2)$
and $\phi_0$ has constant means over the balls $B_r(0). $
Therefore we conclude that 
\[
\bar A(u_0, r)=\frac 1 {r^n}\int _{B_r(0)} F(|\na u_0|^2)+\lambda(u_0)
-\frac{F_0}{r^{n+1}} \int_{\partial B_r(0)}u^2_0
\]
is monotone nondecreasing and
\[
\bar A(u_0, R)-\bar A(u_0, r)=\int_r^R\frac1{t^n}\int_{\partial B_t}F'(|\nabla u|^2)\left(\partial_\nu u_0-\frac{u_0}t\right)^2.
\]
Consequently, for $\rho<r<R$
\[
\bar A(u_0, \rho)\le \bar A(u_0, r)\le \bar A(u_0,R).
\]
Using the scale invariance of $\bar A$ we can construct a
blow up limit $u_{00}$ of $u_0$ (by taking $\rho=M\rho_k, \rho_k\downarrow 0, M>0$), and a blow out limit $u_{0\infty}$ of $u_0$  (by taking $R=MR_k, \rho_k\uparrow \infty, M>0$), such that 
the last inequalities imply
 \[
\bar A(u_{00}, M)\le \bar A(u_0, r)\le \bar A(u_{0\infty}, M).
\]
Since $\bar A$ is bounded and monotone then we conclude that 
$u_{00}$ and $u_{0\infty}$ are homogeneous functions of degree one. Hence
using the main result from \cite{K} we conclude that 
$u_{00}$ and $u_{0\infty}$ are half plane solutions, which 
will have the form $x_1^+$ in a suitable coordinate system. This yields 
\[
\bar A(u_{00}, M)=\bar A(u_{0\infty},M)=\frac{|B_1|}2(F'(1)+\lambda-3F_0)
=\bar A(u_0, r)
\]
for every $r>0$. Consequently, $u_0$ is a homogeneous function of degree one.
Again, applying the main theorem from \cite{K} we conclude that 
$u_0$ is a half plane solution, which is in contradiction with \eqref{dashn}. 
\end{proof}
\begin{remark}\label{rem-small}
There is a full class of divergence type quasilinear 
elliptic equations for which the condition \eqref{eq:pron} is satisfied.
Let 
\[
F(t)=t+\alpha\left(t\arctan t-\frac12\log(1+t^2)\right)
\]
for small $\alpha>0$.
Then taking $F_0=1$ we get 
\begin{equation}
|F'(|\nabla u|^2)-1|=\alpha\arctan|\nabla u|^2\le \alpha C(|\nabla u|_\infty).
\end{equation}
Thus for small $\alpha$, the smallness condition on $L^2$ norm of 
the vectorfield $U$ is satisfied with $\epsilon=\alpha C(|\nabla u|_\infty)$.
\end{remark}

\begin{lemma}
Let $u$ be as in Lemma \ref{lem:jen}. If 
\begin{equation}
\frac1{r^3}\int_{B_r(z)} |u(x)-\nabla u(x)\cdot (x-z)|\leq \delta r
\end{equation}
then $u$ is flat in $B_{r/2}$, and hence $\fb u$ is $C^1$ smooth in $B_{\frac r4}$.
\end{lemma}

\begin{proof}
We outline the proof for completeness. 
It is enough to prove the following claim:  for every $\gamma>0$ there is a 
small $\delta$ so that 
\begin{equation}
\int_{B_1} |u(x)-\nabla u(x)\cdot x|\leq \delta 
\end{equation}
implies 
\begin{equation}
\inf_{|e|=1}\sup_{B_{\frac12}}|u-(x\cdot e)^+|\le \gamma.
\end{equation}
Suppose the assertion of the claim is false. Then 
there is a sequence of minimizers $u_k$ in $B_1$ such that 
\begin{equation}
\int_{B_1} |u_k(x)-\nabla u_k(x)\cdot x|=\frac1k\to \infty 
\end{equation}
but 
\begin{equation}\label{eq:mshort}
\inf_{|e|=1}\sup_{B_{\frac12}}|u_k-(x\cdot e)^+|\ge \gamma.
\end{equation}
Proceeding as before, 
we can extract a subsequence $k_m$ such that 
$u_{k_m}\to u_0$ in $C^{0, 1}_{loc}(\R^3)\cap W^{1,2}_{loc}(\R^3)$,
where $u_0$ is a local minimizer in $\R^3$, and 
\begin{equation}
\int_{B_1} |u_0(x)-\nabla u_0(x)\cdot x|=0.
\end{equation}
Consequently, $u_0$ is a homogeneous function of degree one
in $B_1$, hence by \cite{K} $u_0=(e_0\cdot x)^+$ 
for some unit vector $e_0$. Thus from \eqref{eq:mshort} we have 
$\sup_{B_1}|u_k-(e_0\cdot x)^+|\ge \gamma$, and hence from the 
uniform convergence $u_k\to u_0$
we infer that $\sup_{B_1}|u_0-(e_0\cdot x)^+|\ge \gamma$, 
which is a contradiction. 
\end{proof}

Now we summarize our results and  state our main theorem for minimizers.

\begin{theorem}\label{homogeniety}

Suppose $u$ is a variational solution as in Definition \ref{defn-v-sol}. 
Let $B_R(x_0)\subset\Omega\subset \R^n$ and $x_0\in\partial\{u>0\}$. Assume that
$u$ is Lipschitz in $B_R(x_0)$.
Then for $r\leq R$ 
\setlength\arraycolsep{2pt}
\begin{eqnarray}\label{ar}
\\
A(x_0, r)= \frac 1 {r^n}\int _{B_r(x_0)} F(|\na u|^2)+\lambda(u)
-\frac{F_0}{r^{n+1}} \int_{\partial B_r(x_0)}u^2
-\frac1{r^{n-1}} \int_{\partial B_r(x_0)}\phi\nonumber
\end{eqnarray}
is a monotone function of $r$. Moreover

\begin{equation}
A'(x_0, r)= \frac 2{r^n}\int_{\partial B_r(x_0)}F'(|\na u|^2)\left(u_\nu-\frac ur\right)^2\ge 0, 
\end{equation}

\begin{itemize}
\item if $\phi^{x_0}$ satisfies the condition \eqref{Btch} at $x_0$, and $u_0$
is a blow up corresponding to the sequence $r_k$, then
 any blow-up limit of
$\displaystyle  u_r(y)=\frac{u_0(r y)}{r}$ is homogeneous function of degree 1.
\item if $\Omega\subset \R^3$ and \eqref{Btch} and \eqref{ffrrtt} are satisfied 
then $x_0$ is a regular free boundary point of $\fb u$. 
\end{itemize}
\end{theorem}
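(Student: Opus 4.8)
The plan is to read Theorem~\ref{homogeniety} off the identities already assembled, the only genuinely new ingredient being the blow-up argument in the last item. \textbf{Monotonicity and the formula for $A'$.} I would start from the first domain variation of $J$ (the Proposition of Section~2), which for a.e. $r\le R$ reads
\[
\frac{d}{dr}\left\{\frac{1}{r^n}\int_{B_r(x_0)}F(|\na u|^2)+\lambda(u)\right\}
=\frac{2}{r^n}\int_{\partial B_r(x_0)}F'(|\na u|^2)\left(u_\nu-\frac ur\right)^2
+F_0\frac{d}{dr}\left\{\frac{1}{r^{n+1}}\int_{\partial B_r(x_0)}u^2\right\}+T(r),
\]
and substitute the identity $T(r)=\frac{d}{dr}\big(r^{1-n}\int_{\partial B_r(x_0)}\phi\big)$ obtained in Section~3 through the Helmholtz--W\'eyl splitting $U^{x_0}=\na\phi+h$. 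Rearranging the definition \eqref{ar} then gives
\[
A'(x_0,r)=\frac{2}{r^n}\int_{\partial B_r(x_0)}F'(|\na u|^2)\left(u_\nu-\frac ur\right)^2\qquad\text{for a.e. }r\le R.
\]
Each term in \eqref{ar} is absolutely continuous in $r$ on $(0,R]$ --- the bulk term because its integrand is bounded (Lipschitz hypothesis), the $u^2$ term because $u(x_0)=0$, and the $\phi$ term because of the $W^{1,s}(B_1(x_0))$ regularity of $\phi$, $s\in(1,n)$, together with the representation of $T(r)$ as an $L^1_{\mathrm{loc}}$ function. Hence $A(x_0,\cdot)$ is absolutely continuous, and since $F'\ge c_0>0$ we get $A'(x_0,\cdot)\ge0$, i.e. monotonicity.

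\textbf{Finite right limit.} A bounded non-decreasing function of $r\in(0,R]$ has a finite limit as $r\downarrow0$; under the smallness condition \eqref{sch} at $x_0$, or when $F$ is as in Remark~\ref{rem-small}, the boundedness of $A(x_0,\cdot)$ is precisely Lemma~\ref{trace}, which itself rests on the $L^\infty$ bound for $\fint_{B_r}\phi$ of Lemma~\ref{lem-bound}. Therefore $A(x_0,0^+):=\lim_{r\downarrow0}A(x_0,r)$ exists and is finite.

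\textbf{Homogeneity of blow-ups.} Work under the hypothesis of the previous item and put $u_r(y)=u(x_0+ry)/r$, so that $\na u_r(y)=\na u(x_0+ry)$ and $\{u_r\}$ is bounded in $W^{1,\infty}_{\mathrm{loc}}(\R^n)$ by the Lipschitz bound; along a subsequence $r_j\downarrow0$ one has $u_{r_j}\to u_0$ locally uniformly and $u_{r_j}\rightharpoonup u_0$ weakly in $W^{1,2}_{\mathrm{loc}}(\R^n)$. Applying the $A'$ identity of the first step to $u_{r_j}$ and changing variables $x=x_0+r_j y$ yields, for fixed $0<\rho_1<\rho_2$,
\[
\int_{\rho_1}^{\rho_2}\frac{2}{\rho^n}\int_{\partial B_\rho}F'(|\na u_{r_j}|^2)\left(\partial_\nu u_{r_j}-\frac{u_{r_j}}{\rho}\right)^2 d\rho
=A(x_0,r_j\rho_2)-A(x_0,r_j\rho_1)\longrightarrow 0,
\]
since $r_j\rho_1,r_j\rho_2\to0$ and $A(x_0,\cdot)$ has a finite right limit. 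Using $F'\ge c_0>0$, Fatou's lemma in $\rho$, and the weak lower semicontinuity of the $L^2(\partial B_\rho)$ norm --- valid for a.e. $\rho$, since $u_{r_j}|_{\partial B_\rho}\to u_0|_{\partial B_\rho}$ and $\na u_{r_j}|_{\partial B_\rho}\rightharpoonup\na u_0|_{\partial B_\rho}$ follow from the above convergences and Fubini --- we conclude
\[
\int_{\rho_1}^{\rho_2}\frac{2c_0}{\rho^n}\int_{\partial B_\rho}\left(\partial_\nu u_0-\frac{u_0}{\rho}\right)^2 d\rho=0.
\]
As $\rho_1<\rho_2$ were arbitrary, $\partial_\rho u_0=u_0/\rho$ a.e. on $\R^n\setminus\{0\}$, which is exactly the assertion that $u_0$ is positively homogeneous of degree one.

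\textbf{On the main obstacle.} The essential difficulty has been discharged upstream: the boundedness (from below) of $A(x_0,\cdot)$ hinges on controlling $\fint_{B_r}\phi$ as $r\downarrow0$, carried out in Lemmas~\ref{lem-lem}--\ref{lem-bound}. Within the present argument the one delicate point is the lower-semicontinuity step in the blow-up; weak $W^{1,2}_{\mathrm{loc}}$ convergence of the rescalings suffices, and standard compactness for variational solutions in fact upgrades this to strong $W^{1,2}_{\mathrm{loc}}$ convergence, making the passage to the limit immediate.
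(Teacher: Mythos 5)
Your argument follows the same route as the paper: read off $A'(x_0,r)$ from the domain-variation identity together with the Helmholtz--W\'eyl representation $T(r)=\frac{d}{dr}\bigl(r^{1-n}\int_{\partial B_r}\phi\bigr)$, invoke Lemma~\ref{trace} for boundedness and hence a finite right limit, and then show that the vanishing of $A(x_0,R\rho_k)-A(x_0,S\rho_k)$ forces $\partial_\nu u_0=u_0/r$ on a.e.\ sphere for any blow-up limit $u_0$. The only cosmetic difference is that you pass to the blow-up limit via Fatou and weak lower semicontinuity (bounding $F'$ below by $c_0$), whereas the paper simply appeals to Lipschitz continuity, which by standard compactness for variational solutions gives strong $W^{1,2}_{\mathrm{loc}}$ convergence and the same conclusion directly.
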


%
%
\section{Concluding Remarks}
\subsection{The problem in $\R^3$}
In \cite{K} the author proved that if 
\begin{equation}\label{ffrrtt}
1+2\sup \frac{F''(|\nabla u|^2)}{F'(|\nabla u|^2)}
<4
\end{equation}
then the homogeneous minimizers of $J(u)$ 
in $\R^3$ are flat. 
For $F$ as in Remark \ref{rem-small} 
\[
F(t)=t+\alpha\left(t\arctan t-\frac12\log(1+t^2)\right)
\]
we have 
\[
F'(t)=1+\alpha\arctan t, \quad F''(t)=\alpha \frac1{1+t^2}, 
\]
and thus \eqref{ffrrtt} is equivalent to 
\[
\quad  \frac{\alpha}{(1+\alpha\arctan t)(1+t^2)}<\frac32. 
\]
Combining this with Theorem \ref{homogeniety}, we see that 
for small $\alpha$ the local minimizers are smooth in $\R^3$.

\subsection{The two phase problem}

We are given a domain $\Omega\subset\Rn$ and  a function $g\in
W^{1,2}(\Omega),$ which has no vanishing trace on
$\partial\Omega$. We are looking for function $u$ which minimizes
the functional
\begin{equation}\label{minprob}
J(u)=\int_{\Omega}F(|\nabla u|^2)+\lambda_1\I{u>0}+\lambda_2 \I{u\leq0}
\end{equation}
over the class of admissible functions $K_{g}=\{v: v\in
W^{1,p}(\Omega), v-g\in  W^{1,p}_0(\Omega)\}$. Here $\lambda_{1,2}>0$ are
constants. When $J(g)<\infty$ the minimizer
exists and it is locally Lipschitz continuous (cf. \cite{DK}, \cite{Adv}). 
Hence we can generalize Theorem
\ref{homogeniety} to include the two phase problems, stating that $A(z, r)$, see \eqref{A-defn},  is bounded and non decreasing, and 
the blow up of (local) minimizer
$u$ of (\ref{minprob}) is a homogeneous  function of degree one. 

\subsection{The value $A(z, 0^+)$ at regular free boundary points}
To fix the ideas we let 
$z=0$ be the regular free boundary point, $\lambda$ is chosen 
so  that the free boundary condition is $|\nabla u|=1$, and suppose that 
in a small vicinity of $z=0$ we have the expansion 
$\nabla u(x)=e+Mx+\eta (x), \eta(x)=O(|x|^2)$ for $x\in \{u>0\}$. Here $M$ is a symmetric $n\times n$ matrix. 
Then we can compute 
\begin{align*}
|\nabla u(x)|^2-1
&=
(\nabla u(x)-e)\cdot (\nabla u(x)+e)\\
&=
(Mx+\eta(x))\cdot (2 e+Mx +\eta(x))\\
&=
2Mx\cdot e+O(|x|^2).
\end{align*}
Combining this with the Taylor expansion of $F'$ we obtain 
\begin{align*}
F'(|\nabla u(x)|^2)-F'(1)
&= 
F''(1)(|\nabla u(x)|^2-1)+\frac{F'''(1)}{2!}(|\nabla u(x)|^2-1)^2+\dots\\
&=
F''(1)2Mx \cdot e+O(|x|^2).
\end{align*}
Moreover, since 
$u(x)=x\cdot e+\frac12 Mx\cdot x+o(|x|^2)$ in $\{u>0\}\cap B_r$ for small $r>0$, 
it follows that 
\begin{align*}
(F'(|\nabla u(x)|^2)-F'(1))\cdot \nabla\left ( \frac{u^2(x)}{|x|^2}\right)
&=
(F''(1)2Mx \cdot e+Q(x))\cdot \nabla\left ( \frac{u^2(x)}{|x|^2}\right)\\
&=
\nabla\left (F''(1)2Mx\cdot e \frac{u^2(x)}{|x|^2}\right)-Me \left ( \frac{u^2(x)}{|x|^2}\right)\\
&+
O(|x|).
\end{align*}

Note that 
\begin{align*}
\frac{u^2(x)}{|x|^2}
&=
\frac{(x\cdot e+\frac12 Mx\cdot x+o(|x|^2))^2}{|x|^2}\\
&=
\left(e\cdot \frac x{|x|}\right)^2+O(|x|).
\end{align*}

Recalling the definition of the error term $T$ in \eqref{mainid} we see that 
\begin{align*}
T(r)
&=
\frac 2{r^{n-1}}\int_{\partial B_r}(F'(|\na u|^2)-F_0)\frac {u}{r^2}(u_\nu-\frac ur)\\
&=
\frac 1{r^{n-1}}\int_{\partial B_r} \nabla\left (F''(1)2Mx\cdot e \frac{u^2(x)}{|x|^2}\right)\cdot \nu \\
&
-\frac 1{r^{n-1}}\int_{\partial B_r} Me\cdot \nu \left(e\cdot \frac x{|x|}\right)^2+O(r)\\
&=
\frac d{dr}\left( 
\frac 1{r^{n-1}}\int_{\partial B_r} F''(1)2Mx\cdot e \frac{u^2(x)}{|x|^2}
-\frac r{r^{n-1}}\int_{\partial B_r} Me\cdot \nu \left(e\cdot \frac x{|x|}\right)^2+O(r^2)
\right).
\end{align*}
Consequently, since the above computation does not depend on the specific choice of the 
free boundary point, we have at regular  free boundary point $z$ the asymptotic formula 
\begin{equation}\label{mario}
\frac 1{r^{n-1}}\int_{\partial B_r(z)}\phi^z=a_0(z)+a_1(z)r+O(r^2), 
\end{equation}
where $a_0$ and $a_1$ are constants. 
At this point it is not clear how one can  compute the explicit value of $a_0(z)$ at regular free boundary point $z$. 

\subsection{Ghost functions with additional properties}
The proof of Lemma \ref{lem-lem} shows that if $\phi^z=0$ in $\{u=0\}\cap B_1(z)$ (or on sufficiently dense subset of it) then $\phi^z$ is bounded. It seems to be plausible to conjecture that such ghost function exists.   

%
%
\section*{\footnotesize Acknowledgements}
\footnotesize{The author was partially supported by EPSRC grant EP/S03157X/1 Mean curvature measure of free boundary.}


\begin{bibdiv}
\begin{biblist}

\bib{AC}{article}{
   author={Alt, H. W.},
   author={Caffarelli, L. A.},
   title={Existence and regularity for a minimum problem with free boundary},
   journal={J. Reine Angew. Math.},
   volume={325},
   date={1981},
   pages={105--144},
   issn={0075-4102},
   review={\MR{618549}},
}
\bib{APh}{article}{
   author={Alt, H. W.},
   author={Phillips, D.},
   title={A free boundary problem for semilinear elliptic equations},
   journal={J. Reine Angew. Math.},
   volume={368},
   date={1986},
   pages={63--107},
   issn={0075-4102},
   review={\MR{850615}},
}
\bib{ACF}{article}{
   author={Alt, Hans Wilhelm},
   author={Caffarelli, Luis A.},
   author={Friedman, Avner},
   title={Variational problems with two phases and their free boundaries},
   journal={Trans. Amer. Math. Soc.},
   volume={282},
   date={1984},
   number={2},
   pages={431--461},
   issn={0002-9947},
   review={\MR{732100}},
   doi={10.2307/1999245},
}

\bib{ACF-Quasi}{article}{
   author={Alt, Hans Wilhelm},
   author={Caffarelli, Luis A.},
   author={Friedman, Avner},
   title={A free boundary problem for quasilinear elliptic equations},
   journal={Ann. Scuola Norm. Sup. Pisa Cl. Sci. (4)},
   volume={11},
   date={1984},
   number={1},
   pages={1--44},
   issn={0391-173X},
   review={\MR{752578}},
}

\bib{DK}{article}{
   author={Dipierro, Serena},
   author={Karakhanyan, Aram L.},
   title={A new discrete monotonicity formula with application to a
   two-phase free boundary problem in dimension two},
   journal={Comm. Partial Differential Equations},
   volume={43},
   date={2018},
   number={7},
   pages={1073--1101},
   issn={0360-5302},
   review={\MR{3910196}},
   doi={10.1080/03605302.2018.1499776},
}

\bib{Adv}{article}{
   author={Dipierro, Serena},
   author={Karakhanyan, Aram L.},
   title={Stratification of free boundary points for a two-phase variational
   problem},
   journal={Adv. Math.},
   volume={328},
   date={2018},
   pages={40--81},
   issn={0001-8708},
   review={\MR{3771123}},
   doi={10.1016/j.aim.2018.01.005},
}

\bib{FM}{article}{
   author={Fujiwara, Daisuke},
   author={Morimoto, Hiroko},
   title={An $L_{r}$-theorem of the Helmholtz decomposition of vector
   fields},
   journal={J. Fac. Sci. Univ. Tokyo Sect. IA Math.},
   volume={24},
   date={1977},
   number={3},
   pages={685--700},
   issn={0040-8980},
   review={\MR{492980}},
}

\bib{K}{article}{
   author={Karakhanyan, Aram},
   title={Full and partial regularity for a class of nonlinear free boundary
   problems},
   journal={Ann. Inst. H. Poincar\'{e} C Anal. Non Lin\'{e}aire},
   volume={38},
   date={2021},
   number={4},
   pages={981--999},
   issn={0294-1449},
   review={\MR{4266232}},
   doi={10.1016/j.anihpc.2020.09.008},
}

\bib{K-catenoid}{article}{
   author={Karakhanyan, Aram L.},
   title={Capillary surfaces arising in singular perturbation problems},
   journal={Anal. PDE},
   volume={13},
   date={2020},
   number={1},
   pages={171--200},
   issn={2157-5045},
   review={\MR{4047644}},
   doi={10.2140/apde.2020.13.171},
}

\bib{Simon}{book}{
   author={Simon, Leon},
   title={Lectures on geometric measure theory},
   series={Proceedings of the Centre for Mathematical Analysis, Australian
   National University},
   volume={3},
   publisher={Australian National University, Centre for Mathematical
   Analysis, Canberra},
   date={1983},
   pages={vii+272},
   isbn={0-86784-429-9},
   review={\MR{756417}},
}

\bib{Spruck}{article}{
   author={Spruck, Joel},
   title={Uniqueness in a diffusion model of population biology},
   journal={Comm. Partial Differential Equations},
   volume={8},
   date={1983},
   number={15},
   pages={1605--1620},
   issn={0360-5302},
   review={\MR{729195}},
   doi={10.1080/03605308308820317},
}

\bib{W-cpde}{article}{
   author={Weiss, Georg S.},
   title={Partial regularity for weak solutions of an elliptic free boundary
   problem},
   journal={Comm. Partial Differential Equations},
   volume={23},
   date={1998},
   number={3-4},
   pages={439--455},
   issn={0360-5302},
   review={\MR{1620644}},
   doi={10.1080/03605309808821352},
}

\end{biblist}
\end{bibdiv}

\end{document}